\theoremstyle{plain}
\newtheorem{theorem}{Theorem}
\newtheorem*{theorem*}{Theorem}
\newtheorem*{lemma*}{Lemma}
\theoremstyle{definition}
\newtheorem*{definition*}{Definition}
\theoremstyle{remark}
\newtheorem*{remark*}{Remark}
\newtheorem*{statement*}{Statement}
\begin{document}
\title[The Engel--Minkowski function]{The Engel--Minkowski question-mark function}

\author{Symon Serbenyuk}

\subjclass[2010]{11K55, 11J72, 26A27, 11B34,  39B22, 39B72, 26A30, 11B34.}

\keywords{ Salem function, systems of functional equations,  complicated local structure}

\maketitle
\text{\emph{simon6@ukr.net}}\\
\text{\emph{Kharkiv National University of Internal Affairs,}}\\
\text{\emph{L.~Landau avenue, 27, Kharkiv, 61080, Ukraine}}
\begin{abstract}

The present article deals with properties of a certain function of the Minkowski type with arguments defined by Engel series.  Differential, integral, and other properties of the function were considered.  
\end{abstract}

\section{Introduction}

\emph{An Engel series} is a series of the form
\begin{equation}
\label{eq: Engel series}
\frac{1}{q_1}+\frac{1}{q_1q_2}+\dots + \frac{1}{q_1q_2\cdots q_k}+\dots , 
\end{equation}
where $(q_k)$ is a fixed sequence of positive integers such that the condition $2\le q_k\le q_{k+1}$  holds for all $k\in\mathbb N$.

As noted in  \cite{ShW2021}, ``these infinite series expansions play an important role in the arithmetic and geometric natures of real numbers and are also closely connected with dynamical systems, probability theory and fractal geometry...". One can remark that various expansions of real numbers (\cite{{preprint1-2018}, {Symon2023-numeral-systems}}, etc.) are useful for modelling pathological (see \cite{Wikipedia-pathology}) mathematical objects with complicated local structure (the Moran sets (see \cite{Bunde1994, Falconer1997, Falconer2004,  Mandelbrot1999, Moran1946, HRW2000, sets1, sets2, sets}, etc.,  and references therein), singular (for example, \cite{{Salem1943}, {Zamfirescu1981}, {Minkowski}, {S.Serbenyuk 2017}, Symon2023,Symon2024}), non-differentiable (for example, see \cite{{Bush1952}, {Serbenyuk-2016}}, etc.), and nowhere monotonic  \cite{Symon2017, Symon2019, Symon2023, Symon2024} functions), which have applied character for modelling   real objects and  processes in  economics, physics, and technology, etc.,  as well as with different areas of mathematics (for example, see~\cite{BK2000, ACFS2011, Kruppel2009, OSS1995,    Symon21, Symon21-1, Sumi2009, Takayasu1984, TAS1993, Symon2021}). Such new objects  are also useful for discovering new mathematical relationships (for example, see \cite{ PVB2001, PKT2024, Salem1943} and references
therein).

As for sets of the Moran type, related  investigations of these sets, of multifractal analysis/formalism, and of  the refined multifractal formalism (for example, see  \cite{ XW2008, {DSM2021-EPhJ}, {Selmi2022-ASM}, {ALSW2024}, {ALSW24}, {ChLS2024}, {TESCh2024}} and references therein),  are one of the key areas in modern mathematics. Really,   ``the multifractal analysis was proven to be a very useful technique in the analysis of measures, both in theory and applications.  Also, the multifractal and  fractal analysis allows one to perform a certain classification of singular measures"\cite{Serbenyuk2024}. 

According to the described motivation, let us introduce a task to model  pathological functions of the Minkowski type in terms of various (positive and sign-variable) expansions  of real numbers. This article is the first article in the corresponding tuple of papers of the author. In this research, the main attention will be given to the Engel--Minkowski function. Such function can be useful for future investigations of number approximations and relationships between numeral systems. 

The classical Minkowski function was introduced in \cite{Minkowski} and was investigated by a number of researhers (for example, see \cite{PVB2001, {S.Serbenyuk 2017}}).

As noted in the paper~\cite{Symon2023-numeral-systems}, there exists the tendency to  investigate  Engel series incuding their positive and alternating forms (for example, see \cite{Engel1, Engel 3} and references in \cite{Engel2, Engel4}) are such examples. But in the last years, the main attention of researchers is given to certain sets defined in terms of these series.  
Really, for example, the papers \cite{LL2018, LW2003, ShW2020} deal with the Hausdorff dimension of some sets including sets of the full Hausdorff dimension, as well as \cite{FW2018, ShW2021, Wu2000} are devoted to  the growth speed of digits in Engel expansions including the Galambos conjecture (\cite{G1976}) and its generalizations.  Indeed, in \cite{ShW2021}, the growth speed of digits in Engel expansions from a multifractal point of view is investigated, and in  \cite{Wu2000}, it is proven the Galambos conjecture  on the fact that the Hausdorff dimension of the set
$$
\left\{x: x\in (0,1],  \lim_{n\to\infty}{q^{1/n} _n(x) \ne e}\right \}
$$
is  equal to $1$. A  survey on Engel series  is given more detail in \cite{ShW2021}.

\section{Auxiliary notions and definition of the main object}
  
Any real number $x\in (0,1]$ can be represented by series \eqref{eq: Engel series}.  In \cite{ShW2021}, it is noted that ``a real number is rational if and only if the sequence of its digits is ultimately periodic",  but according to  \cite{ERS1958},  rational numbers can be having a finite Engel expansion.

One can note that  the following inequalities hold for any $k\in \mathbb N$ (\cite[p.10]{ERS1958}):
\begin{equation}
\label{eq: Engel series1}
\sum^{k-1} _{l=1}{\prod^{l} _{j=1}{\frac{1}{q_j}}}+
 \frac{1}{q_1q_2\cdots q_{k-1}q_k} \le x<\sum^{k-1} _{l=1}{\prod^{l} _{j=1}{\frac{1}{q_j}}}+ \frac{1}{q_1q_2\cdots q_{k-1}(q_k-1)}. 
\end{equation}

A map of the form
$$
\sigma(x)=\sum^{\infty} _{k=2}{ \frac{1}{q_1q_2\cdots q_{k-1}q_k}}
$$
is  \emph{the shift operator} and 
$$
x=\frac{1}{q_1}+\frac{1}{q_1}\sigma(x).
$$

By analogy, we get
$$
\sigma^n(x)=\sum^{\infty} _{k=n+1}{ \frac{1}{q_{n+1}q_{n+2}\cdots q_k}}
$$
and
$$
\sigma^n(x)=\frac{1}{q_{n+1}}+\frac{1}{q_{n+1}}\sigma^{n+1}(x),
$$
where $\sigma^0(x)=x$ and $n=0, 1, 2, 3, \dots$.

Considering the differential, let us note the following auxiliary property:
\begin{equation}
\label{eq: Engel series2}
d\left(\sigma^n(x)\right)=\frac{d\left(\sigma^{n+1}(x)\right)}{q_{n+1}}.
\end{equation}

To model a correct function of the Minkowski type, let us determine Engel series of the form:
\begin{equation}
\label{eq: Engel series3}
\sum^{\infty}_{k=1}{\prod^{k} _{m=1}{\frac{1}{2+a_1+a_2+\dots +a_m-m}}},
\end{equation}
where $(a_k)$ is a fixed sequence of posaitive integers. 

Really, for $\tau_{k+1}:={2+a_1+a_2+\dots +a_m+a_{m+1}-(m+1)}$ and $\tau_{k}:=2+a_1+a_2+\dots +a_m-m$, we obtain $\tau_{k+1}-\tau_{k}\ge 0$ and $\tau_{k}\ge 2$, i.e., $2 \le\tau_{k}\le \tau_{k+1}$ holds for any $k\in \mathbb N$. 

Let us reconstruct inequalities \eqref{eq: Engel series1} into the following auxiliary inequalities:
\begin{equation}
\label{eq: Engel series4}
\begin{split}
 \prod^{k} _{m=1}{\frac{1}{2+a_1+\dots+a_m-m}} &\le x- \sum^{k-1} _{l=1}{\prod^{l} _{j=1}{\frac{1}{2+a_1+\dots+a_j-j}}}\\ 
&<\frac{1}{1+a_1+\dots +a_k-k} \prod^{k-1} _{t=1}{\frac{1}{2+a_1+\dots+a_t-t}} .
\end{split}
\end{equation}

So, let us model \emph{the Engel--Minkowski question-mark function} by the following form:
\begin{equation}
\label{eq: Engel-Minkowski}
?_{EM}\left(x\right)=2^{1-a_1}-2^{1-a_1-a_2}+\dots + (-1)^{k+1}2^{1-a_1-a_2-\dots -a_k}+\dots,
\end{equation}
where $x$ represented in terms of expansion \eqref{eq: Engel series3}, i.e., $a_k\in\mathbb N$.

\section{The Engel--Minkowski question-mark function}

\begin{theorem}
In the segment $(0,1)$, the function $?_{EM}$ is:

\begin{itemize}
\item a continuous function;
\item is a  nowhere monotonic function;
\item  a  function, which derivative at $x_0\in(0,1)$ is equal to $0$ or $\infty$ and can be calculated by the formula:
$$
\lim_{k\to\infty}{\prod^{k-1} _{r=1}{\frac{2+a_1+\dots +a_k-k}{2^{a_k}}}}.
$$
\end{itemize}
\end{theorem}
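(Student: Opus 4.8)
The plan is to build everything on the cylinder structure furnished by \eqref{eq: Engel series4} together with the self-similarity of \eqref{eq: Engel-Minkowski}. Fix a prefix $(a_1,\dots,a_k)$, write $S_k=a_1+\dots+a_k$, and let $\Delta_k$ be the set of $x$ whose expansion \eqref{eq: Engel series3} begins with these digits; by \eqref{eq: Engel series4} it is an interval of length
\[
|\Delta_k|=\frac{1}{(1+S_k-k)(2+S_k-k)}\prod_{m=1}^{k-1}\frac{1}{2+S_m-m}.
\]
Since $?_{EM}$ depends on the digits only through the partial sums $S_j$, on $\Delta_k$ it satisfies
\[
?_{EM}(x)=\sum_{i=1}^{k}(-1)^{i+1}2^{1-S_i}+(-1)^{k}2^{-S_k}\,?_{EM}(\widehat x),
\]
where $\widehat x$ is the point carrying the shifted digit string $(a_{k+1},a_{k+2},\dots)$. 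As \eqref{eq: Engel-Minkowski} is an alternating series with strictly decreasing terms, $?_{EM}$ takes values in $(0,1)$, so the oscillation of $?_{EM}$ on $\Delta_k$ is at most $2^{-S_k}$. Because $a_j\ge 1$ forces $S_k\ge k$, both $|\Delta_k|\to 0$ and $2^{-S_k}\le 2^{-k}\to 0$, so the oscillation over every rank-$k$ cylinder vanishes as $k\to\infty$.

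\textbf{Continuity.} The oscillation bound immediately yields continuity at every $x_0$ with a unique expansion. What requires care is the countable set of cylinder endpoints, i.e.\ the rationals carrying a finite expansion $(a_1,\dots,a_k)$, each of which is also represented by the infinite string $(a_1,\dots,a_k+1,1,1,\dots)$ (the Engel analogue of $0.5=0.4\overline 9$, cf.\ \cite{ERS1958}). Continuity at such a point is exactly the assertion that the value carried by the finite expansion coincides with the common one-sided limit delivered by the neighbouring cylinders, i.e.\ that
\[
\sum_{i=1}^{k}(-1)^{i+1}2^{1-S_i}=\sum_{i=1}^{k-1}(-1)^{i+1}2^{1-S_i}+(-1)^{k+1}2^{-S_k}\sum_{j\ge 0}\left(-\frac12\right)^{j},
\]
the right-hand tail being the geometric series generated by the run of $1$'s. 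Reducing this matching to a closed-form identity and checking it for every $k$ is the heart of the continuity proof, and I expect it to be the main obstacle: everything hinges on whether the coefficient $2^{1-S_k}$ is exactly reproduced by the summed $1$-tail.

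\textbf{Nowhere monotonicity.} I would argue locally inside an arbitrary cylinder $\Delta_k$. Its rank-$(k+1)$ subcylinders, indexed by $a_{k+1}=1,2,3,\dots$, march monotonically in $x$, while the increment they contribute to $?_{EM}$ carries a sign fixed by the parity of $k$ in \eqref{eq: Engel-Minkowski}, a sign that reverses on passing to rank $k+2$. Exhibiting in $\Delta_k$ two points $x_1<x_2$ with $?_{EM}(x_1)<?_{EM}(x_2)$ read off at one depth, together with two points $x_3<x_4$ with $?_{EM}(x_3)>?_{EM}(x_4)$ read off at the next depth, shows that no subinterval of $(0,1)$ can be monotone. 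This step is routine parity bookkeeping once the self-similarity above is in hand.

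\textbf{The derivative.} For $x_0$ and the rank-$k$ cylinder $\Delta_k\ni x_0$, the ratio of the image length to the preimage length is
\[
\frac{2^{-S_k}}{|\Delta_k|}=2^{-S_k}\,(1+S_k-k)(2+S_k-k)\prod_{m=1}^{k-1}(2+S_m-m),
\]
which, up to the polynomial factor $1+S_k-k$, equals $\prod_{r=1}^{k}\dfrac{2+S_r-r}{2^{a_r}}$; since only the values $0$ and $\infty$ are at stake, that factor is immaterial. For $x\in\Delta_k\setminus\Delta_{k+1}$ the difference quotient $\bigl(?_{EM}(x)-?_{EM}(x_0)\bigr)/(x-x_0)$ is squeezed between constant multiples of this quantity at levels $k$ and $k+1$, so whenever $?'_{EM}(x_0)$ exists it equals $\lim_{k\to\infty}\prod_{r=1}^{k}\dfrac{2+S_r-r}{2^{a_r}}$. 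Finally, a short case analysis on the per-step factor $\dfrac{2+S_{k+1}-(k+1)}{2^{a_{k+1}}}$ shows it cannot tend to $1$ in a way consistent with a finite nonzero limit of the product, giving the dichotomy that the derivative, where it exists, is $0$ or $\infty$. As noted above, the decisive and most error-prone point remains the endpoint gluing underlying continuity; once it is secured, the length estimates, the parity argument, and the squeezing all go through without difficulty.
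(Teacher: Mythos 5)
You have put your finger on exactly the right spot, but the step you defer is not a technicality that ``goes through'': it fails. Your matching identity requires the coefficient $2^{1-S_k}$ contributed by the finite expansion to equal the tail contribution $2^{-S_k}\sum_{j\ge 0}(-\tfrac12)^{j}=\tfrac23\,2^{-S_k}$, i.e.\ it requires $2=\tfrac23$. Concretely, take $k=1$, $a_1=1$, so the cylinder is $[\tfrac12,1)$. Approaching $x=\tfrac12$ from the right forces $a_1=1$ and $a_2\to\infty$, whence $?_{EM}(x)=1-2^{-a_2}+\dots\to 1$; but $\tfrac12$ itself carries the infinite string $(a_k)=(2,1,1,\dots)$ (the expansion with $q_k\equiv 3$), giving $?_{EM}(\tfrac12)=2^{-1}-2^{-2}+2^{-3}-\dots=\tfrac13$, which is also the left-hand limit. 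So $?_{EM}$ jumps by $\tfrac23$ at $\tfrac12$, and the same computation produces a jump at the left endpoint of every cylinder, a dense set; no choice of value at these points can help, since the two one-sided limits themselves disagree. Thus your continuity argument cannot be completed as written, and continuity holds only at points with a unique infinite expansion (which your oscillation bound does establish). For what it is worth, the paper's own proof has the same hole in disguise: it bounds $|?_{EM}(x_2)-?_{EM}(x_1)|$ by $2^{-(p+1)}$, where $p$ is the first index of digit disagreement, and then asserts that $x_2-x_1\to 0$ forces $p\to\infty$; at a cylinder endpoint such as $\tfrac12$ the index $p$ stays equal to $1$ however close $x_2$ gets.

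The other two parts of your plan are essentially the paper's argument. Your parity bookkeeping for nowhere monotonicity is the same sign analysis the paper performs on $?_{EM}(x_2)-?_{EM}(x_1)$ according to whether the first differing digit sits at an odd or even position. For the derivative, the paper likewise evaluates the difference quotient along pairs differing in a single digit and along shrinking cylinders, arriving at the same ratio $2^{-S_k}/|\Delta_k|$ up to the immaterial polynomial factor. Note, however, that your ``squeezing'' claim for arbitrary $x\to x_0$ is asserted rather than proved: a lower bound on the oscillation of $?_{EM}$ over a subcylinder in terms of $2^{-S_{k+1}}$ is not automatic, precisely because $?_{EM}$ is not monotone there; the paper does not supply this either, as it only computes the limit along the special sequences. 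The genuinely missing ingredient in both your write-up and the paper is the endpoint analysis above.
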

\begin{proof}
Suppose $x_1, x_2 \in (0,1)$ such that $x_1<x_2$ and
$$
x_1=\sum^{\infty}_{k=1}{\prod^{k} _{m=1}{\frac{1}{2+a_1(x_1)+a_2(x_1)+\dots +a_m(x_1)-m}}},
$$
$$
x_2=\sum^{\infty}_{k=1}{\prod^{k} _{m=1}{\frac{1}{2+a_1(x_2)+a_2(x_2)+\dots +a_m(x_2)-m}}}.
$$
Then there exists positive integer $p$  such that $a_p(x_2)< a_p(x_1)$ and $a_i(x_1)=a_i(x_2)$ for $i=\overline{1, p-1}$. 

Hence $?_{EM}(x_2) - ?_{EM}(x_1)=$
$$
=(-1)^{m+1}2^{1-a_1(x_2)-\dots - a_{p-1}(x_2) -a_p(x_2)}+(-1)^{m+2}2^{1-a_1(x_2)-\dots - a_{p-1}(x_2) -a_p(x_2)-a_{p+1}(x_2)}+\dots
$$
$$
-\left((-1)^{m+1}2^{1-a_1(x_1)-\dots - a_{p-1}(x_1) -a_p(x_1)}+(-1)^{m+2}2^{1-a_1(x_1)-\dots - a_{p-1}(x_1) -a_p(x_2)-a_{p+1}(x_1)}+\dots\right)
$$
$$
=(-1)^{m+1}2^{1-a_1-\dots - a_{p-1}}\left(\frac{1}{2^{a_p(x_2)}}-\frac{1}{2^{a_p(x_1)}}-\frac{1}{2^{a_p(x_2)}}?_{EM}\left(\sigma^p(x_2)\right)+\frac{1}{2^{a_p(x_1)}}?_{EM}\left(\sigma^p(x_1)\right)\right),
$$
since
$$
\sigma^n(x)=\sum^{\infty}_{k=1}{\prod^{k} _{r=1}{\frac{1}{2+a_{n+1}+a_{n+2}+\dots +a_{n+r}-r}}}
$$
and
$$
?_{EM}\left(\sigma^n(x)\right)=2^{1-a_{n+1}}-2^{1-a_{n+1}-a_{n+2}}+\dots
$$
are true.

So, the function is \emph{a nowhere monotonic function}, because:
\begin{itemize}
\item for the case when $p$ is odd, we have $?_{EM}(x_2) - ?_{EM}(x_1)>0$ since $a_p(x_2)< a_p(x_1)$ and $(-1)^{m+1}=1$;
\item for the case when $p$ is even, we obtain $?_{EM}(x_2) - ?_{EM}(x_1)<0$ since  $(-1)^{m+1}=-1$.
\end{itemize}
In addition, since $0<?_{EM}(x), ?_{EM}(\sigma^n(x))\le 1$ hold, we get
\begin{equation}
\label{eq: Engel series7}
\left |?_{EM}(x_2) - ?_{EM}(x_1) \right|< 2^{1-a_1-a_2-\dots - a_{p-1}-a_p(x_2)}
\end{equation}
and
$$
\left |?_{EM}(x_2) - ?_{EM}(x_1) \right|>-2^{1-a_1-a_2-\dots - a_{p-1}-a_p(x_1)}.
$$
Whence the inequalities
$$
-2^{1-a_1-a_2-\dots - a_{p-1}-a_p(x_1)}<?_{EM}(x_2) - ?_{EM}(x_1)<2^{1-a_1-a_2-\dots - a_{p-1}-a_p(x_2)}<2^{-a_1-a_2-\dots - a_{p-1}}<2^{-(p+1)}
$$
hold for the case when $p$ is odd, and
$$
-2^{1-a_1-a_2-\dots - a_{p-1}-a_p(x_2)}<?_{EM}(x_2) - ?_{EM}(x_1)<2^{1-a_1-a_2-\dots - a_{p-1}-a_p(x_1)}<2^{-a_1-a_2-\dots - a_{p-1}}<2^{-(p+1)}
$$
hold for the case when $p$ is even.

Let us prove that $?_{EM}(x)$ is continuous. Really, 
$$
\lim_{x_2-x_1\to 0}{\left|?_{EM}(x_2) - ?_{EM}(x_1) \right|}=\lim_{p\to\infty}{\left(  2^{1-a_1-a_2-\dots - a_{p-1}}\left(?_{EM}(\sigma^{p-1}(x_2))-?_{EM}(\sigma^{p-1}(x_1))\right) \right)}
$$
$$
=\lim_{p\to\infty}{\frac{1}{2^{a_1+a_2+\dots + a_{p-1}}}}=0,
$$
since $?_{EM}, \sigma^{n}$ are bounded functions. 

Let us consider differential properties of $?_{EM}$, i.e., consider the limit
$$
\lim_{x_2-x_1\to 0}{\frac{?_{EM}(x_2) - ?_{EM}(x_1) }{x_2-x_1}}.
$$
Let us begin with a certain sequence of arguments. Since for $x_1$ and $x_2$ such that there exists positive integer $p$  for which $a_p(x_2)< a_p(x_1)$ and $a_i(x_1)=a_i(x_2)$ for $i\ne p$,the following equalities hold: 
$$
x_2-x_1=\frac{1}{2+a_1+a_2+\dots +a_{p-1}+a_p(x_2)-p}\prod^{p-1} _{r=1}{\frac{1}{2+a_1+a_2+\dots +a_r-r}}
$$
$$
-\frac{1}{2+a_1+a_2+\dots +a_{p-1}+a_p(x_1)-p}\prod^{p-1} _{r=1}{\frac{1}{2+a_1+a_2+\dots +a_r-r}}+
$$
$$
+\prod^{p-1} _{r=1}{\frac{1}{2+a_1+\dots +a_r-r}}\left(\frac{\sigma^p(x_{1,2})}{2+a_1+\dots +a_{p-1}+a_p(x_2)-p}-\frac{\sigma^p(x_{1,2})}{2+a_1+\dots +a_{p-1}+a_p(x_1)-p}\right)
$$
$$
=(1+\sigma^p(x_{1,2}))\left(\frac{1}{2+a_1+\dots +a_{p-1}+a_p(x_2)-p}-\frac{1}{2+a_1+\dots +a_{p-1}+a_p(x_1)-p}\right)\times
$$
$$
\times\prod^{p-1} _{r=1}{\frac{1}{2+a_1+\dots +a_r-r}}
$$
and
$$
?_{EM}(x_2) - ?_{EM}(x_1)=(-1)^{p+1}(1-?_{EM}(\sigma^p(x_{1,2})))2^{1-a_1+\dots+a_{p-1}}\left(\frac{1}{2^{a_p(x_2)}}-\frac{1}{2^{a_p(x_1)}}\right)
$$
we obtain
$$
\lim_{x_2-x_1\to 0}{\frac{?_{EM}(x_2) - ?_{EM}(x_1) }{x_2-x_1}}=\lim_{p\to\infty}{\frac{\prod^{p-1} _{r=1}{{(2+a_1+\dots +a_r-r)}}}{2^{a_1+a_2+\dots +a_{p-1}}}}
$$
$$
=\lim_{p\to\infty}{\prod^{p-1} _{r=1}{\frac{2+a_1+\dots +a_r-r}{2^{a_r}}}}.
$$

Whence, if $(a_k)$ is a sequence such that for a finite subsequence of which the condition
$$
\frac{2+a_1+\dots +a_r-r}{2^{a_r}}>1
$$
is true, then the derivative equals zero. So, we obtain that the derivative of $?_{EM}$ can be equal   $0$ or $\infty$.

Let note the other example.  For this, let use the auxiliary notion of cylinders (on these sets see, for example, in \cite{ShW2020}). 

\emph{A cylinder $I(c_1c_2\ldots c_n)$ of rank $n$ with base $c_1c_2\ldots c_n$} is a set, whose elements have fixed digits $q_i=c_i$, $i=\overline{1,n}$, in own Engel expansion. This is an is an interval with following two endpoints (\cite{ShW2020}):
$$
\sum^{n} _{k=1}{\prod^{k}_{l=1}{\frac{1}{c_l}}}
$$
and
$$
\frac{1}{c_1c_2\cdots c_{n-1}(c_n-1)}+\sum^{n-1} _{k=1}{\prod^{k}_{l=1}{\frac{1}{c_l}}}
$$
with the Lebesque measure, which equals
$$
\frac{1}{c_1c_2\cdots c_{n-1}c_n(c_n-1)}.
$$
Then the derivative on cylinders can be calculated by the following:
$$
\lim_{n\to \infty}{\frac{2^{1-c_1-c_2-\dots - c_{n-1}}\left(\frac{1}{2^{c_n-1}}-\frac{1}{2^{c_n}}\right)}{\frac{1}{\tau_1\tau_2\cdots \tau_{n-1}\tau_n (\tau_n-1)}}}=\lim_{n\to \infty}{\frac{\tau_1\tau_2\cdots \tau_{n-1}\tau_n}{2^{c_1+c_2+\dots +c_n}}},
$$
where $\tau_i=2+c_1+c_2+\dots+c_i-i$ for $i=\overline{1,n}$.
\end{proof}

\begin{theorem}  The following system of functional equations
\begin{equation}
\label{eq: system1}
{f\left(\sigma^{n-1}(x)\right)}=\frac{1}{2^{a_{n}}}- \frac{1}{2^{a_{n}}}f\left(\sigma^n(x)\right),
\end{equation}
where $x$ represented by Engel series \eqref{eq: Engel series3}, $a_n\in \mathbb N$, $n=1,2, \dots$, $\sigma$ is the shift operator, and $\sigma_0(x)=x$, has the unique solution
$$
\frac{?_{EM}(x)}{2}=2^{-a_1}-2^{-a_1-a_2}+\dots + (-1)^{n+1}2^{-a_1-a_2-\dots -a_n}+\dots
$$
in the class of determined and bounded on $(0, 1]$ functions.
\end{theorem}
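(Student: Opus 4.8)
The plan is to separate the statement into existence and uniqueness. For existence, I would set $g(x):=\tfrac{1}{2}?_{EM}(x)$ and verify directly that $g$ solves \eqref{eq: system1}. Using the shifted expansion $?_{EM}(\sigma^{n-1}(x))=2^{1-a_n}-2^{1-a_n-a_{n+1}}+\cdots$ recorded in the excerpt, one has $g(\sigma^{n-1}(x))=2^{-a_n}-2^{-a_n-a_{n+1}}+2^{-a_n-a_{n+1}-a_{n+2}}-\cdots$. On the other hand, factoring $2^{-a_n}$ out of the series for $g(\sigma^n(x))=2^{-a_{n+1}}-2^{-a_{n+1}-a_{n+2}}+\cdots$ gives
$$\frac{1}{2^{a_n}}-\frac{1}{2^{a_n}}g(\sigma^n(x))=2^{-a_n}-2^{-a_n-a_{n+1}}+2^{-a_n-a_{n+1}-a_{n+2}}-\cdots,$$
which is exactly $g(\sigma^{n-1}(x))$. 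Thus $g$ is a solution, and it is bounded on $(0,1]$ because $0<?_{EM}\le 1$, as noted in the previous theorem.

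For uniqueness I would take an arbitrary determined and bounded solution $f$, say $|f|\le M$ on $(0,1]$, and telescope the recurrence. Writing \eqref{eq: system1} for $n=1$ and then substituting in turn the expressions obtained from $n=2,3,\dots$, one arrives after $n$ steps at
$$f(x)=\sum_{k=1}^{n}(-1)^{k+1}2^{-a_1-a_2-\cdots-a_k}+(-1)^n2^{-a_1-a_2-\cdots-a_n}f(\sigma^n(x)).$$
The finite sum here is precisely the $n$-th partial sum of the proposed solution $\tfrac{1}{2}?_{EM}(x)$, so the whole matter reduces to controlling the remainder $R_n:=(-1)^n2^{-a_1-\cdots-a_n}f(\sigma^n(x))$.

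The decisive step, and the only place where a genuine estimate is required, is to show $R_n\to 0$. Since every $a_k\in\mathbb N$ satisfies $a_k\ge 1$, we have $a_1+a_2+\cdots+a_n\ge n$, hence $2^{-a_1-\cdots-a_n}\le 2^{-n}$; together with $|f(\sigma^n(x))|\le M$ this yields $|R_n|\le M\,2^{-n}\to 0$. Letting $n\to\infty$ then forces
$$f(x)=\sum_{k=1}^{\infty}(-1)^{k+1}2^{-a_1-a_2-\cdots-a_k}=\frac{?_{EM}(x)}{2}$$
for every $x\in(0,1]$, which gives uniqueness. The essential point is that mere boundedness of $f$ suffices exactly because the coefficients $2^{-a_1-\cdots-a_n}$ decay at least geometrically; no continuity or monotonicity of $f$ enters the argument, and this geometric decay of the prefactor is what I expect to be the crux of the proof.
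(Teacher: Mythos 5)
Your proposal is correct and follows essentially the same route as the paper: iterate the functional equation to obtain the partial sums of $\tfrac{1}{2}?_{EM}(x)$ plus a remainder $(-1)^{n}2^{-a_1-\cdots-a_n}f(\sigma^n(x))$, and kill the remainder using boundedness of $f$ together with the geometric decay of the prefactor. You are somewhat more explicit than the paper (cleanly separating the existence check for $g=\tfrac{1}{2}?_{EM}$ from the uniqueness telescoping, and spelling out the bound $a_1+\cdots+a_n\ge n$), but the underlying argument is identical.
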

\begin{proof} Our statement is true, because the following properties and relationships hold: the function  is a determined and bounded  function on $(0,1]$, as well as according to system~\eqref{eq: system1},  we have
$$
\frac{?_{EM}(x)}{2}=\frac{1}{2^{a_{1}}}- \frac{1}{2^{a_{1}}}f(\sigma(x))=\frac{1}{2^{a_{1}}}- \frac{1}{2^{a_{1}}}\left(\frac{1}{2^{a_{2}}}- \frac{1}{2^{a_{2}}}f(\sigma^2(x))\right)
$$
$$
\dots =2^{-a_1}-2^{-a_1-a_2}+\dots + (-1)^{n+1}2^{-a_1-a_2-\dots -a_n}+\frac{(-1)^{n+2}}{2^{a_1+a_2+\dots +a_n}}f(\sigma^n(x)).
$$
This statement is true, because
$$
\lim_{n\to\infty}{\frac{(-1)^{n+2}}{2^{a_1+a_2+\dots +a_n}}f(\sigma^n(x))}=0.
$$
\end{proof}

\begin{theorem}
For the Lebesgue integral, the following equality holds:
$$
\int^1 _0 {?_{EM}(x)dx}=2\frac{\sum^{\infty} _{a=1}{\frac{1}{a(a+1)2^a}}}{1+\sum^{\infty} _{a=1}{\frac{1}{a(a+1)^22^a}}}.
$$
\end{theorem}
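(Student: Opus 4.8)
The plan is to exploit the self-similar structure carried by the functional equation \eqref{eq: system1}, reducing the evaluation of $I:=\int_0^1 ?_{EM}(x)\,dx$ to a single linear equation in $I$. First I would partition $(0,1)$ into the first-rank cylinders $C_a:=\{x:a_1(x)=a\}=\left[\frac{1}{a+1},\frac1a\right)$, $a\in\mathbb N$. Since $a_1=a$ corresponds to $q_1=a+1$, the cylinder-length formula recalled in the proof of the first theorem gives $|C_a|=\frac{1}{a(a+1)}$, and $\sum_{a\ge1}|C_a|=1$, so these cover $(0,1)$ up to a null set. On $C_a$ the functional equation \eqref{eq: system1} (in the form $f=?_{EM}/2$, i.e.\ \eqref{eq: Engel-Minkowski}) reads
\[
?_{EM}(x)=2^{1-a}-2^{-a}\,?_{EM}(\sigma(x)).
\]

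Integrating over each cylinder and summing, I would obtain
\[
I=\sum_{a=1}^{\infty}\int_{C_a}?_{EM}(x)\,dx=2\sum_{a=1}^{\infty}\frac{1}{a(a+1)2^a}-\sum_{a=1}^{\infty}2^{-a}J_a,
\qquad J_a:=\int_{C_a}?_{EM}(\sigma(x))\,dx,
\]
so that the first sum already produces the numerator $2\sum_{a}\frac{1}{a(a+1)2^a}$ of the asserted value. The whole computation then hinges on evaluating $J_a$. Here I would change variables $y=\sigma(x)$ on $C_a$: from $x=\frac{1}{q_1}+\frac{1}{q_1}\sigma(x)$ with $q_1=a+1$, the differential rule \eqref{eq: Engel series2} controls the scaling of the length element, while the cylinder-length formula governs how prepending the digit $a$ contracts deep cylinders. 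Combining the base-cylinder measure $|C_a|=\frac{1}{a(a+1)}$ with the extra shift factor $\frac{1}{q_1}=\frac{1}{a+1}$ would yield
\[
J_a=\int_{C_a}?_{EM}(\sigma(x))\,dx=\frac{1}{a(a+1)^2}\int_0^1 ?_{EM}(y)\,dy=\frac{I}{a(a+1)^2}.
\]
Substituting this and writing $T:=\sum_{a\ge1}\frac{1}{a(a+1)^22^a}$ gives the self-referential identity $I=2S-IT$ with $S:=\sum_{a\ge1}\frac{1}{a(a+1)2^a}$, which solves to $I=\frac{2S}{1+T}$, the claimed formula.

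The step I expect to be the main obstacle is precisely the evaluation of $J_a$, i.e.\ the control of the pushforward of Lebesgue measure under $\sigma$ restricted to $C_a$. The delicacy is that $\sigma$ deletes the leading Engel digit, and prepending a digit $a$ is \emph{not} an affine map of $(0,1)$ unless $a=1$: the correct contraction factor must be extracted as the limit of the ratios of rank-$(n+1)$ to rank-$n$ cylinder lengths, an infinite product over the tail digits rather than a constant Jacobian. Justifying that this limiting density integrates against $?_{EM}$ to give exactly $\frac{1}{a(a+1)^2}I$, rather than the $\frac{1}{a(a+1)}I$ one would naively read off from $|C_a|$ alone, is the crux; I would therefore treat this identity as the key lemma to be isolated and proved carefully.

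To make the interchanges legitimate I would rely on the qualitative facts already available: the uniform bound $0<?_{EM}(x)\le1$, the continuity of $?_{EM}$ from the first theorem, and dominated convergence to justify swapping the limit defining the Jacobian with the sum over $a$ and with the integral. Once the lemma $J_a=\frac{I}{a(a+1)^2}$ is in hand, the remaining algebra is purely formal, the two displayed series $S$ and $T$ converge absolutely (being dominated by $\sum_a 2^{-a}$), and the solution of $I=2S-IT$ for $I$ completes the proof.
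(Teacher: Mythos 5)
Your proposal follows the paper's own route almost exactly: the paper likewise integrates the relation $?_{EM}(x)=2^{1-a_1}-2^{-a_1}\,?_{EM}(\sigma(x))$ over the rank-one cylinders $\left[\tfrac{1}{a+1},\tfrac1a\right)$, arrives at $\tfrac12\int_0^1?_{EM}\,dx=\sum_a\tfrac{1}{a(a+1)2^a}-\bigl(\sum_a\tfrac{1}{a(a+1)^22^a}\bigr)\int_0^1?_{EM}(\sigma(x))\,d(\sigma(x))$, identifies the last integral with a multiple of $I$ ``according to self-affine properties of this function,'' and solves the resulting linear equation (your bookkeeping $I=2S-IT$ is in fact the internally consistent version of this). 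So you have faithfully reconstructed the argument and, more importantly, correctly located the load-bearing step: your lemma $J_a=\frac{I}{a(a+1)^2}$ is exactly what the paper's phrase ``self-affine properties'' stands in for.

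The genuine gap is that neither you nor the paper proves this lemma, and the justification you sketch would not survive the careful treatment you yourself call for. The shift in play is the renormalized one sending the digit string $(a,a_2,a_3,\dots)$ to $(a_2,a_3,\dots)$; restricted to $C_a$ it is not affine for $a\ge 2$, and its pushforward of Lebesgue measure onto $(0,1]$ is not a constant multiple of Lebesgue measure. Concretely, the preimage inside $C_a$ of the rank-one cylinder $\{y:a_1(y)=b\}$ is the rank-two cylinder with $q_1=a+1$, $q_2=a+b+1$, whose measure is $\frac{1}{(a+1)(a+b+1)(a+b)}$, and the ratio of this to the image measure $\frac{1}{b(b+1)}$ depends on $b$ (for $a=1$ it equals $\frac{b}{2(b+2)}$, not the constant $\frac{1}{a(a+1)}=\frac12$ that a uniform pushforward would require). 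Hence $J_a$ cannot be converted into $c_a\int_0^1?_{EM}(y)\,dy$ by any change of variables with constant Jacobian, and the identity $J_a=\frac{I}{a(a+1)^2}$ does not follow from the uniform bound $0<?_{EM}\le1$, continuity, or dominated convergence; it would have to be established, if it is true at all, by integrating $?_{EM}$ against the actual (non-uniform) pushforward measure cylinder by cylinder. Until that is done, your derivation of $I=2S-IT$ --- like the paper's --- is formal rather than proved, precisely at the point you flagged as the crux.
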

\begin{proof} Using system \eqref{eq: system1}, equality \eqref{eq: Engel series2}, and according to self-affine properties of $?_{EM}$, we obtain
$$
d(\sigma^n(x))=\frac{d(\sigma^{n+1}(x))}{2+a_1+a_2+\dots +a_{n+1}-n-1},
$$
as well as
$$
?_{EM}(\sigma^n(x))=\frac{1}{2^{a_{n}}}\left(1-?_{EM}(\sigma^{n+1}(x))\right).
$$
So, 
$$
\frac 1 2 \int^1 _0 {?_{EM}(x)dx}=\int^1 _0{ \frac{1}{2^{a_{1}}}\left(1-?_{EM}(\sigma(x))\right)}dx
$$
$$
=\int^1 _0 { \frac{1}{2^{a_{1}}}}dx-\int^1 _0 { \frac{1}{2^{a_{1}}}\cdot \frac{1}{1+a_1}?_{EM}(\sigma(x))}d(\sigma(x))
$$
$$
=\sum^{\infty} _{a_1=1}{\int^{\frac{1}{a_1}} _{\frac{1}{a_1+1}}{ \frac{1}{2^{a_{1}}}}}dx-\sum^{\infty} _{a_1=1}{\int^{\frac{1}{a_1}} _{\frac{1}{a_1+1}}{ \frac{1}{2^{a_{1}}}\cdot \frac{1}{1+a_1}?_{EM}(\sigma(x))}d(\sigma(x))}
$$
$$
=\sum^{\infty} _{a=1}{\frac{1}{a(a+1)2^a}}-\left(\sum^{\infty} _{a=1}{\frac{1}{a(a+1)^22^a}}\right)\int^1 _0{?_{EM}(\sigma(x))d(\sigma(x))}.
$$

Since for all steps $a_n = 1, 2, 3, \dots$ and according to self-affine properties of this function, we have
$$
\int^1 _0 {?_{EM}(x)dx}=2\frac{\sum^{\infty} _{a=1}{\frac{1}{a(a+1)2^a}}}{1+\sum^{\infty} _{a=1}{\frac{1}{a(a+1)^22^a}}}.
$$
\end{proof}

\section*{Statements and Declarations}
\begin{center}
{\bf{Competing Interests}}

\emph{The author states that there is no conflict of interest.}
\end{center}

\begin{center}
{\bf{Data Availability Statement}}

\emph{There are not suitable for this research.}
\end{center}

\end{document}